\theoremstyle{definition}
\newtheorem{definition}{Definition}
\newtheorem{theorem}{Theorem}
\newtheorem{lemma}{Lemma}
\newtheorem{remark}{Remark}
\newtheorem{assumption}{Assumption}
\newtheorem{objective}{Objective}
\title{\LARGE \bf
Trust-based Rate-Tunable Control Barrier Functions for Non-Cooperative Multi-Agent Systems
%Data-driven Robustness Metric for STL specifications based on CBFs
}
\author{Hardik Parwana$^{1}$, Aquib Mustafa$^{2}$, and Dimitra Panagou$^{3}$ \\
\textit{This work has been submitted for review to a conference}
\thanks{$^{1}$Hardik Parwana is with Robotics Institute, University of Michigan, Ann Arbor, MI 48109, USA, {\tt\small hardiksp@umich.edu}}
\thanks{$^2$Aquib Mustafa is with Advanced Technologies and Research, Sensia LLC (Rockwell Automation + Schlumberger), Houston, TX 77079, USA, {\tt\small aquibmustafa.225@gmail.com} }
\thanks{$^3$Dimitra Panagou is with Department of Aerospace Engineering and Robotics Institute, University of Michigan, Ann Arbor MI 48109, {\tt\small dpanagou@umich.edu}}
}
\newcommand{\reals}{\mathbb{R}}
\newcommand{\s}{\mathcal{S}}
\newcommand{\X}{\mathcal{X}}
\newcommand{\U}{\mathcal{U}}
\newcommand{\K}{\mathcal{K}}
\newcommand{\C}{\mathcal{C}}
\newcommand{\classK}{\mbox{class-$\K$} }
\newcommand{\Int}{\text{Int} }
\newcommand{\eqn}[1]{\begin{align}#1\end{align}}
\begin{document}

\maketitle
\thispagestyle{empty}
\pagestyle{empty}

%%%%%%%%%%%%%%%%%%%%%%%%%%%%%%%%%%%%%%%%%%%%%%%%%%%%%%%%%%%%%%%%%%%%%%%%%%%%%%%%

\begin{abstract}
For efficient and robust task accomplishment in multi-agent systems, an agent must be able to distinguish cooperative agents from non-cooperative agents, i.e., uncooperative and adversarial agents. Task descriptions capturing safety and collaboration can often be encoded as Control Barrier Functions (CBFs). In this work, we first develop a trust metric that each agent uses to form its own belief of how cooperative other agents are. The metric is used to adjust the rate at which the CBFs allow the system trajectories to approach the boundaries of the safe region. Then, based on the presented notion of trust, we propose a Rate-Tunable CBF framework that leads to less conservative performance compared to an identity-agnostic implementation, where cooperative and non-cooperative agents are treated similarly. Finally, in presence of non-cooperating agents, we show the application of our control algorithm to heterogeneous multi-agent system through simulations.

\end{abstract}

\section{Introduction}

Collaborating robot teams can enable tasks such as payload transportation, surveillance and exploration \cite{rizk2019cooperative}. The ability to move through an environment under spatial and temporal constraints such as connectivity maintenance, collision avoidance, and waypoint navigation is essential to successful operation. In principle, the group of agents is expected to complete desired tasks (in terms of goal-reaching and safety specifications) even in the presence of non-cooperative agents. With safety specifications encoded as Control Barrier Functions (CBFs), this work develops a trust metric that each agent uses to distinguish cooperative from non-cooperative agents(that includes uncooperative and adversarial agents as defined formally in Section \ref{section::agent_types}). This trust metric is then used to shape the response of the controller by adjusting the CBF parameters. As a motivational example, consider the scenario depicted in Fig.\ref{fig::scenario1}, where a green robot will be intercepted by a red robot if it follows a nominal trajectory of moving forward. The red robot wishes to cause no harm, but gives priority to its own task. Therefore, if the green robot has to believe, based on previous observations, that the red robot will not chase it, then it can adjust its response and minimize deviation from its nominal trajectory.

\begin{figure}[htp]
    \centering
    \includegraphics[scale=0.4]{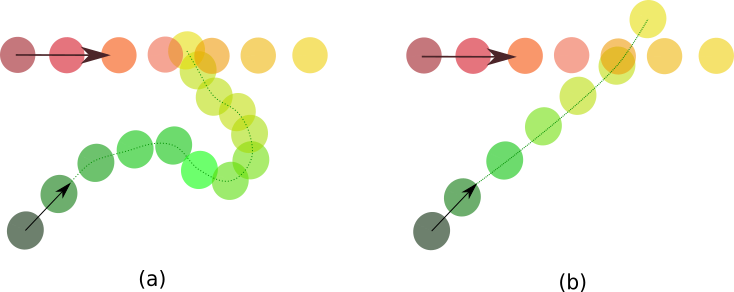}
    \caption{\small{Green robot moving closer to a red uncooperative robot. Gradations to lighter colors show progress in time.
    %Black arrows show the velocity direction and magnitude along the vertical axis. 
    A fixed value of $\alpha$, a controller parameter in (\ref{eq::cbf_condition}), might force the group to back off, as in (a), and deviate significantly from nominal trajectory. Whereas, a trust based relaxation would adjust $\alpha$ to allow a more efficient motion in (b) where it slows down enough to let red robot pass first and then continue on its motion in nominal direction.}}
    \label{fig::scenario1}
    \vspace{-2mm}
\end{figure}
%Most of the existing research dealing with adversarial or uncertain elements in a system adopts worst-case analysis which can be too conservative. First, a trust factor, which represents the belief of other agents being cooperative, uncooperative, or adversarial, is designed based on expected motions of other agents. Then, it is used to relax or tighten the constraints. By determining the contribution to safety solely through an adaptive CBF and its derivatives, the algorithm is also inherently agnostic to dynamics and  thus, enables implementation on heterogeneous systems. 

%This abstraction of high-level tasks also makes our algorithm agnostic to dynamics and controllers employed, thus ensuring safety among heterogeneous agents.
Specifying tasks for multi-agent systems and designing safe controllers for successful execution has been an active research topic  \cite{lindemann2019control,lindemann2020barrier}. While research on multi-agent systems with non-cooperative agents does not provide all these guarantees, several remarkable results still exist on resilient control synthesis \cite{pasqualetti2011consensus,Sundaram2011,usevitch2019resilient,saulnier2017resilient,zhou2018resilient,guerrero2019realization,mustafa2019attack,pirani2019design}. \cite{saulnier2017resilient} and \cite{zhou2018resilient}, the authors present resilient algorithms for flocking and active target-tracking applications, respectively. In \cite{pirani2019design} and \cite{mustafa2019attack}, the authors design game-theoretic and adaptive control mechanisms to directly reject the effect of adversaries without the need of knowing or detecting their identity. Recently, CBF-based approaches have also been presented for achieving resilience under safety and goal-reaching objectives. In particular,  \cite{borrmann2015control} developed multi-agent CBFs for non-cooperative agents.
%cooperative, uncooperative, and adversarial robots,
%and \cite{cheng2020safe} extended it to a scenario with uncertain dynamics wherein state-dependent disturbances are learned using Gaussian processes. 
\cite{usevitch2021adversarial} designed a mechanism to counter collision-seeking adversaries by designing a controller based on CBFs that is robust to worst-case actions by adversaries. All of the above studies either assume that the identities of adversarial agents is known \textit{a priori}, or they design a robust response without knowing or detecting their identities, both of which can lead to conservative responses. Moreover, most of these works also assume that each agent knows the exact dynamics of the other agents. 
%and their dynamics is known apriori or design a response resilient to identities, both of which can lead to conservative responses.
To the best of our knowledge, no prior studies have employed CBFs to make inferences from the behavior of surrounding agents, and tune their response to reduce conservatism while still ensuring safety.

In this paper, we make two contributions to mitigate the above limitations. First, we introduce the notion of trust, and propose a trust model that each agent uses to develop its own belief of how {cooperative} other agents are. Since CBFs ensure safety by restricting the rate of change of barrier functions along the system trajectories \cite{ames2016control}, we design the trust metric based on how robustly these restrictions are satisfied. Each agent rates the behavior of a neighbor agent on a continuous scale, as either 1) being cooperative, i.e., an agent that actively gives priority to safety, or 2) being uncooperative, i.e., an agent that does not actively try to collide, however, gives priority to its own tasks, and thus disregards safety or 3) being adversarial, i.e., an agent that actively tries to collide with other agents. Second, we design an algorithm that tightens or relaxes a CBF constraint based on this trust metric, and develop Trust-based, Rate-Tunable CBFs (trt CBFs) that allow {online} modification of the parameter that defines the class-$\mathcal{K}$ function employed with CBFs\cite{ames2016control}. 
Our notion of trust shares inspiration with works that aim to develop beliefs on the behavior of other agents and then use it for path planning and control. Deep Reinforcement Learning (RL) has been used to learn the underlying interactions in multi-agent scenarios \cite{brito2021learning}, specifically for autonomous car navigation, to output a suitable velocity reference to be followed for maneuvers such as lane changing or merging. 
%\red{\cite{cai2021safe} uses Multi-agent RL with outputs filtered by the composition of cooperative and uncooperative CBFs to learn a safe policy.} 
Few works also explicitly model trust for a multi-agent system \cite{valtazanos2011intent, fooladi2020bayesian, hu2021trust}. In particular, \cite{valtazanos2011intent} develops an intent filter for soccer-playing robots by comparing the actual movements to predefined intent templates, and gradually updating beliefs over them. Among the works focusing on human-robot collaboration,  \cite{fooladi2020bayesian} learns a Bayesian Network to predict the trust level by training on records of each robot's performance level and human intervention. \cite{hu2021trust} defines trust by modeling a driver's perception of vehicle performance in terms of states such as relative distance and velocity to other vehicles, and then using a CBF to maintain the trust above the desired threshold. The aforementioned works show the importance of making inferences from observations, however they were either developed specifically for semi-autonomous and collaborative human-robot systems, making use of offline datasets, or when they do consider trust, it is mostly computed with a \textit{model-free} approach, such as RL. {The above works also use the computed trust as} a monitoring mechanism rather than being related directly to the low-level controllers that decide the final control input of the agent and ensure safety-critical operation. In this work, we pursue a \textit{model-based} design of trust metric that actively guides the low-level controller in relaxing or tightening the CBF constraints and help ensure successful task completion.

% BELOW not such relavent reference
%\cite{saeidi2018incorporating} also   trust for interacting modeling human's trust of robots in context of multi-robot syste 

\section{Preliminaries}

\subsubsection{Notations}
The set of real numbers is denoted as $\reals$ and the non-negative real numbers as $\reals^+$. Given $x\in \reals$, $y \in \reals^{n_i}$, and $z\in \reals^{n_i\times m_i}$, $|x|$ denotes the absolute value of $x$ and $||y||$ denotes $L_2$ norm of $y$. The interior and boundary of a set $\C$ are denoted by $\textrm{Int}(\C)$ and $\partial \C$. For $a\in \reals^+$, a continuous function $\alpha:[0,a)\rightarrow[0,\infty)$ is a $\classK$ function if it is strictly increasing and $\alpha(0)=0$. Furthermore, if $a=\infty$ and $\lim_{r\rightarrow \infty} \alpha(r)=\infty$, then it is called class-$\mathcal{K}_\infty$. The angle between any two vectors $x_1,x_2\in \reals^{n_i}, \forall i=1,2$ is defined with respect to the inner product as $\theta = x_1^Tx_2/||x_1||~ ||x_2||$. Also, $\delta y$ represents a infinitesimal variation in $y$.

\subsubsection{Safety Sets}
Consider a nonlinear dynamical system
\eqn{
\dot{x} = f(x) + g(x)u,
\label{eq::dynamics_general}
}
where $x\in \X \subset \reals^{n_i}$ and $u \in \U \subset \reals^{m_i}$ represent the state and control input, and  $f:\X\rightarrow \reals^{n_i}, g:\X\rightarrow \reals^{n_i\times m_i}$ are locally Lipschitz continuous functions. A safe set $\s$ of allowable states be defined as the 0-superlevel set of a continuously differentiable function $h(x):\mathcal{X} \rightarrow \reals$ as follows
\eqn{
        \s & \triangleq \{ x \in \X : h(x) \geq 0 \}, \\
        \partial \s & \triangleq \{ x\in \X: h(x)=0 \}, \\
        \Int (\s) & \triangleq \{ x \in \X: h(x)>0  \}.
\label{eq::safeset}
}

\noindent

\begin{definition}
 (CBF\cite{ames2019control}) Let $\s\subset \mathcal{X}$ be the superlevel set of a continuously differentiable function $h:\mathcal{X}\rightarrow \reals$. $h$ is a CBF on {$\mathcal{X}$} if there exists an extended class-$\mathcal{K}_\infty$ function $\nu$ such that for system \eqref{eq::dynamics_general}
 \eqn{
 \sup_{u\in \mathcal{U}} \left[ \frac{\partial h}{\partial x}( f(x) + g(x)u ) \right] \geq -\nu(h(x)), ~ \forall x \in \mathcal{X}. 
 \label{eq::cbf_derivative}
 }
\end{definition}

\begin{lemma}(\cite[Theorem 2]{ames2019control})
 Let $\s\subset \mathcal{X}$ be the superlevel set of a smooth function $h:\mathcal{X}\rightarrow \reals$. If $h$ is a CBF {on $\mathcal{X}$}, and $\partial h/\partial x\neq 0 ~ \forall x\in \partial \s$, then any Lipschitz continuous controller belonging to the set
 \eqn{
 K(x) = \{ u\in \mathcal{U} : \frac{\partial h}{\partial x}( f(x) + g(x)u ) + \nu(h(x)) \geq 0 \},
 }
for the system \eqref{eq::dynamics_general} renders the set $\s$ safe.
\end{lemma}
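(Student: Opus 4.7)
The plan is to reduce the claim to a scalar differential inequality on $h$ evaluated along closed-loop trajectories and then invoke a comparison-type argument. First I would fix any Lipschitz continuous feedback $u:\X\to\U$ with $u(x)\in K(x)$ for every $x$. Together with the Lipschitz continuity of $f$ and $g$, this yields local existence and uniqueness of the closed-loop trajectory $x(t)$ starting from any $x_0\in\s$. Differentiating $h$ along this trajectory and using $u(x)\in K(x)$ gives
\begin{equation*}
\dot{h}(x(t)) \;=\; \frac{\partial h}{\partial x}\bigl(f(x(t)) + g(x(t))u(x(t))\bigr) \;\geq\; -\nu(h(x(t))).
\end{equation*}

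Next, I would introduce the scalar comparison system $\dot{\eta}=-\nu(\eta)$ with $\eta(0)=h(x_0)\geq 0$. Since $\nu$ is an extended \classKinf function, $\nu(0)=0$, so $\eta\equiv 0$ is an equilibrium; any solution with $\eta(0)\geq 0$ therefore satisfies $\eta(t)\geq 0$ on its domain. The comparison lemma applied to the inequality above then yields $h(x(t))\geq \eta(t)\geq 0$, so $x(t)\in\s$ for all $t$ in the interval of existence, which is precisely the forward invariance (safety) statement we want.

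The main technical hurdle is the regularity of $\nu$ at the origin: an extended \classKinf function need not be Lipschitz at $0$, so uniqueness of the comparison ODE, and hence the cleanest form of the comparison lemma, can fail. The most elegant workaround is to substitute Nagumo's theorem for the comparison step: the assumption $\partial h/\partial x \neq 0$ on $\partial\s$ ensures that $\partial\s$ is a smooth codimension-one manifold, and for any $x\in\partial\s$ the CBF inequality reduces to $\dot{h}(x)\geq -\nu(0)=0$. The closed-loop vector field is therefore subtangential to $\s$ along $\partial\s$, and Nagumo's theorem then gives forward invariance of the closed set $\s$ at once. I would favor this Nagumo-based route because it uses the nondegeneracy hypothesis $\partial h/\partial x\neq 0$ explicitly (the pure comparison argument leaves it dangling) and sidesteps the regularity technicality of $\nu$ at the origin; the comparison argument can then be mentioned as the alternative justification when $\nu$ is locally Lipschitz.
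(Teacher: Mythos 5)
Your proof is correct and takes essentially the same route as the source: the paper states this lemma without proof (it is quoted from the cited reference), but both that reference and the paper's own proof of Theorem 1 rest on exactly the Nagumo subtangentiality argument you favor --- on $\partial \s$ the constraint gives $\dot h \geq -\nu(0) = 0$, and $\partial h/\partial x \neq 0$ guarantees the tangent cone to $\s$ at boundary points is $\{v : \frac{\partial h}{\partial x}v \geq 0\}$, so forward invariance follows. Your identification of the non-Lipschitzness of $\nu$ at the origin as the obstacle to a naive comparison-lemma argument, and of the nondegeneracy hypothesis as the ingredient that makes the Nagumo route rigorous, is exactly the right reading of why the lemma is stated with that hypothesis.
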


In this paper, we consider only the linear class-$\mathcal{K}_\infty$ functions of the form \begin{align}
    \nu(h(x)) = \alpha h(x), \quad \alpha \in \reals^+.
\end{align} 
With a slight departure from the notion of CBF defined above, $\alpha$ is treated as a parameter in this work and its value is adjusted depending on the trust that an agent has built on other agents based on their behaviors. Since $\alpha$ represents the maximum rate at which the state trajectories $x(t)$ are allowed to approach the boundary $\partial \mathcal S$ of the safe set $\mathcal S$, a higher value of $\alpha$ corresponds to relaxation of the constraint, {which allows agents to get closer to each other}, whereas a smaller value tightens it. The design of the trust metric is presented in Section \ref{section::methodology}.

\section{Problem Statement}
 In this paper, we consider a system consisting of $N$ agents with $x_i\in \X_i \subset \reals^{n_i}$ and $u_i\in \U_i \subset \reals^{m_i}$ representing the state and control input of the agent $i\in\mathcal{V}= \{1,2,..,N\}$. The dynamics of each agent is represented as
\eqn{
\dot{x}_i = f_i(x_i) + g_i(x_i)u_i,
\label{eq::dynamics}
}
where $f_i:\X\rightarrow \reals^{n_i}, g_i:\X\rightarrow \reals^{n_i\times m_i}$ are Lipschitz continuous functions. %\red{with Lipschitz constants $b_f \in \mathbb{R_{+}}$ and $b_g\in \mathbb{R_{+}}$, $\forall i \in \mathcal{V}$, respectively.}

The dynamics and the control input of an agent $i$ is not known to any other agent $j\in\mathcal{V} \setminus i $. However, we have the following assumption on the available observations for each agent.
\begin{assumption}
   Let the combined state of all agents be $x = [x_1^T ~ , ..., x_n^T]^T$. Each agent $i$ has perfect measurements $z$ of the state vector $x$. Furthermore, $\dot x_i$ is a Lipschitz continuous function of $z$, i.e., $\dot{x}_i=F_i(z), \forall i\in \mathcal{V}$.
    \label{assumption::true_estimate}
\end{assumption}
The above assumption implies that all agents, including adversarial and uncooperative agents, design their control inputs based on $z$. {Therefore, we assume a fully connected (complete) communication topology. However, each agent computes its own control input independently based on its CBFs that can be different from CBFs of its neighbors($h_{ij}\neq h_{ji}, \alpha_{ij}\neq \alpha_{ji}$ in Section \ref{section::task_specification}).} We also have the following assumption on the estimate that agent $i$ has regarding the closed-loop dynamics $F_j$ of agent $j$.
\begin{assumption}
Each agent $i$ has an estimate $\hat{F}_j$ of the true closed-loop response $F_j(z)$ of other agents $j\in \mathcal{V}\setminus i$ such that there exists $b_{F_j}: \reals^{n_j}\rightarrow \reals^+$ for which
\eqn{
\dot{x}_j \in \hat{F}_j = \{ ~y\in \reals^{n_j}~~ |~~ || F_j(z)-y || \leq b_{F_j}(z) ~ \}
}
\label{assumption::estimate}
\end{assumption}
\vspace{-6mm}
\begin{remark}
Assumption \ref{assumption::estimate} can easily be realized for systems that exhibit smooth enough motions for a learning algorithm to train on. For example, a Gaussian Process \cite{srinivas2012information}, that can model arbitrary Lipschitz continuous functions to  {return estimates with mean and uncertainty bounds}, can use past observations to learn the relationship between $\dot{x}_i$ and $z$. Another way would be to bound $\dot{x}_i$ at time $t+1$ with the value of $\dot{x}_i$ at time $t$ and known, possibly conservative, Lipschitz bounds of the function $F_i$.
\end{remark}

\subsubsection{Task Specification}
\label{section::task_specification}
Agents for which we aim to design a controller are called \textit{intact agents}, and they are cooperative in nature (defined in Section \ref{section::agent_types} ). Each intact agent $i$ is 
assigned a goal-reaching task that it needs to accomplish while maintaining inter-agent safety with other agents. To encode goal reaching task, we design a Control Lyapunov Function (CLF) $V_i : \mathbb R^n \rightarrow \mathbb R$ of the form $V_i(x_i,x_i^r) = || x_i-x_i^r ||^2$, where $x_i^r \in \mathbb R^n $ is the reference state. The safety constraints w.r.t $j\in \mathcal{V}\setminus i$ is encoded in terms of CBFs $h_{ij}$. %Moreover, a continuously differentiable functions $h_{ij}:\mathbb R^n \rightarrow \mathbb R$ acts as barrier functions and employed to encode safety constraints. 
Agent $i$ {ensures that $h_{ij}\geq 0$ is maintained by} imposing the following CBF condition {on $x_i,x_j\in\mathcal{X}$}
\eqn{
\dot{h}_{ij} = \frac{\partial h_{ij}}{\partial x_i}\dot{x}_i + \frac{\partial h_{ij}}{\partial x_j}\dot{x}_j \geq -\alpha_{ij} h_{ij}, 
\label{eq::cbf_condition}
}
where $\alpha_{ij}\in \reals^+$. {Suppose the 0-superlevel set of $h_{ij}$ is given by $\mathcal{S}_{ij}$. Then from Lemma 1, if the initial states $x_i(0),x_j(0)\in \s_{ij}$, then $\s_{ij}$ if forward invariant.} 
%We use exponential CLF for simplicity but Finite-time CLFs[], Fixed-time CLFs[], and time varying barrier functions[] may also be used. We impose satisfaction of $\phi$ as a loose constraint and give priority to satisfaction of predicates $\pi$ at all times.
Note that compared to some works, we do not merge barrier functions into a single function. This allows us to monitor contribution of each neighbor to inter-agent safety individually and design trust metric as detailed in Section\ref{section::trust_metric}.
\subsubsection{Types of Agents}
\label{section::agent_types}
The considered agent behaviors are classified into three types based on their interaction with others. 
\begin{definition}
\textbf{(Cooperative agent)} An agent $j$ is called cooperative to agent $i$ if, given $\dot{x}_i$, agent $j$ always chooses its control input $u_j$ such that \eqref{eq::cbf_condition} holds for some $\alpha_{ij}>0$.
\end{definition}

\begin{definition}
\textbf{(Adversarial Agent)}
An agent $j$ is called adversarial to agent $i$ if {agent $j$ designs its} control input such that following holds
\eqn{
\frac{\partial h_{ij}}{\partial x_j} \dot{x}_j \leq 0 ~~ \forall x_i\in \reals^{n_i},\,x_j\in \reals^{n_j}.
}
\end{definition}
\begin{definition}
\textbf{(Uncooperative agent)}
An agent $j$ is called uncooperative to agent $i$ if its control design disregards any interaction with $i$. That is, $\dot{x}_j \equiv \dot{x}_j(x_j)$ and not $\dot{x}_j \equiv \dot{x}_j(x_i, x_j)$.

\end{definition}

While all the agents are assumed to behave in one of the above ways, the complex nature of interactions in a multi-agent system makes it hard to distinguish between the different types of agents. Moreover, it is not necessary that $h_{ij}=h_{ji}$ and $\alpha_{ij}=\alpha_{ji}$, for example agents may employ different safety radius for collision avoidance. Thus, an agent $i$ may perceive a cooperative agent $j$ to be uncooperative if $j$ cannot satisfy $i$'s prescribed level of safety, given by $\alpha_{ij}$, under the influence of a truly uncooperative or adversarial agent $k$. Therefore, rather than making a fine distinction, this work (1) first, presents a trust metric that rates other agents on a continuous scale as being cooperative, adversarial or uncooperative and then, (2) by leveraging it, an adaptive form of $\alpha_{ij}$ in \eqref{eq::cbf_condition} is presented that adjusts the safety condition based on the $j$'s behavior.
 The effect of poorly chosen parameter $\alpha$ was illustrated in Fig.\ref{fig::scenario1}(a) where a conservative response is seen.
% sensitive to factors such as initial condition, number of neighbors and the speed of uncooperative agents. 
%Consider the scenario depicted in Fig.\ref{fig::scenario1}, where a group of robots will be intercepted by another robot if they were to keep moving forward and follow a  nominal trajectory. This outlier robot wishes to cause no harm, but gives priority to its own task. 
%In Fig.\ref{fig::scenario1}, a fixed value of $\alpha$ would force the group to back-off or move towards the right to evade possible collision. 
However, the green robot can relax the constraint, i.e., increase $\alpha$, to allow itself to get closer to red robot and just slow down rather than turning away from it.
%We now formalize this idea in the following subsection. 
%by empowering intact agents to adapt their parameter $\alpha$ based on their belief of whether agent $j$ is contributing to their safety and if it's. %This rating is dependent on three quantities

\subsubsection{Objective}
In this subsection, we first presented some definitions and then formulate the objective of this paper. 
\begin{definition}
\textbf{(Nominal Direction)} A nominal motion direction $\dot{\hat{x}}_i^n$ for agent $i$ is defined with respect to its global task and Lyapunov function $V_i$ as follows:
\eqn{
\dot{\hat{x}}_i^n = -\frac{\partial V_i}{\partial x_i}/|| \frac{\partial V_i}{\partial x_i} ||.
\label{eq::nominaldirection}
}
\end{definition}
\begin{definition}
\textbf{(Nominal Trajectory)} A trajectory $x_i^n(t)$is called nominal trajectory for an agent $i$, if agent follows the dynamics in \eqref{eq::dynamics} with  some $x_i(0) \in \mathcal{X} $ and control action $u_i=-\gamma\frac{\partial V_i}{\partial x_i}/|| \frac{\partial V_i}{\partial x_i} ||$, where  $\gamma \in \mathbb R$ denotes some design gain.
%\textbf{THE DEFINITION IS NOT RIGOROUS. DO YOU MEAN THE SOLUTION $x_i^(t)$ to the equation 18 from some initial condition $x_i^(0)$? If so, just mention it below (18). In fact, (18) defines some form of nominal gradient flow for the system dynamics.}
%resulting from moving continuously along nominal directions is called nominal trajectory.
\end{definition}

We now define the objective of this paper. The problem is as follows.

\begin{objective}
    Consider a multi-agent system of $N$ agents governed by the dynamics given by \eqref{eq::dynamics_general}, with the set of agents denoted as $\mathcal V$, $|\mathcal V| = N$, the set of intact agents denoted as $\mathcal{N}$, the set of adversarial and uncooperative agents denoted as $\mathcal{A}\neq \emptyset$, with $\mathcal{V}=\mathcal{N}\cup \mathcal{A}$. Assume that the identity of the adversarial and uncooperative agents is unknown. Each agent $i$ has a global task encoded via a Lyapunov function $V_i$, and $N$ local tasks encoded as barrier functions $h_{ij}, j\in \{1,2,..,N\}$. Design a decentralized controller to be implemented by each intact agent $i\in \mathcal{N}$ such that 
    \begin{enumerate}
        \item $h_{ij}(t)\geq 0, ~~ \forall t\geq 0$ for all $i \in \mathcal{N}, j \in \mathcal V$.
        \item The deviation between actual and  nominal trajectory is minimized, i.e., for the designed controller $||x_i(t)-x^n_i(t)||$ is minimum where $x_i^n(t)$ denotes nominal trajectory as defined in Definition 6.
        
        %\textbf{WRITE IN MATH:} 
        %\item \textcolor{}{$(x_i,t) \models \pi_i ~~\forall t>0$ even in presence of adversarial and uncooperative agents.}
        %\item \textcolor{blue}{$(x_i,t) \models \phi$ with minimal deviation of trajectory $x_i(t)$ from its nominal trajectory $x_i^{n}(t)$.}
    \end{enumerate}
\end{objective}

% \begin{assumption}
% The nominal trajectories of all the agents satisfy the following conditions
% \begin{enumerate}
%     \item $x_{d_i}(t) \neq x_{d_j}(t) ~\forall i\neq j$ and $\forall t>=0$. 
%     % \item $x_{d_i}$
% \end{enumerate}
% \end{assumption}

\section{Methodology}
\label{section::methodology}
The intact agents compute their control input in two steps. The first step computes a reference control input $u^r_i$ for an agent $i$ that makes agent to converge to its nominal trajectory. Let the nominal trajectory be given by $x^r_i(t)$ and the goal reaching can be encoded as Lyapunov function $V_i = ||x_i-x^r_i||^2$. Then for $k>0$, a reference control input is designed using exponentially stabilizing CLF conditions as follows\cite{khalil2002nonlinear}
\vspace{-0.2cm}
\begin{subequations}\label{eq::CLF}
    \begin{align}
    \begin{split}\label{eq::CLFa} 
     u^r_i(x) = \arg \min_{u \in \mathcal{U}_i} \quad & u^T u
     \vspace{-0.1cm}
    \end{split}\\ 
    \begin{split} \label{eq::CLFb}
    \textrm{s.t.} \quad & \dot{V}(x_i,u_i) \leq -kV(x_i,x^r_i).
    \end{split}
    \end{align}
% \vspace{-0.1cm}
\end{subequations}
In the second step, the following CBF-QP is defined to minimally modify $u^r_i$ while satisfying {(\ref{eq::cbf_condition}) for all agents $j\in\mathcal{V}\setminus i$ }
\vspace{-0.2cm}
\begin{subequations}\label{eq::CBF}
\begin{align}
\begin{split}\label{eq::CBFa} 
u_i(x,\alpha_{ij}) = \arg \min_{u} \quad & ||u-u^r_i||^2
 \vspace{-0.1cm}
\end{split}\\ 
\begin{split}  \label{eq::CBFb}
\textrm{s.t.} \quad & \frac{\partial h_{ij}}{\partial x_i}\dot{x}_i + \min_{\hat{\dot{x}}_j\in \hat{F}_j}\left\{ \frac{\partial h_{ij}}{\partial x_j}\hat{\dot{x}}_j\right\} \\
  & \quad \quad \geq  -\alpha_{ij} h_{ij},
\end{split}
%\begin{split} \label{eq::CBFb}
%\hrulefill \forall j\in \mathcal{V}\setminus i,
% \vspace{-0.1cm}
%\end{split}
\end{align}
\label{eq::CBF_QP}
\end{subequations}\noindent
where $\alpha_{ij}\in \reals^+$, and $\hat{\dot{x}}_j$ denotes all possible movements of agent $j$ per Assumption \ref{assumption::estimate}. Eq.(\ref{eq::CBF}) provides safety assurance for {$i$ w.r.t the worst-case predicted motion of $j$}. Henceforth, whenever we refer to the \textit{actual motion} of any agent $j$ with respect to $i$, we will be referring to its uncertain estimate that minimizes the contribution to safety,
\eqn{
    \dot{a}_j^i = \arg \min_{\hat{\dot{x}}_j\in \hat{F}_j}\left\{ \frac{\partial h_{ij}}{\partial x_j}\hat{\dot{x}}_j\right\}
    \label{eq::actual_motion}.
}

In the following sections, we first propose Tunable-CBF as a method that allows us to relax or tighten CBF constraints while still ensuring safety. Then, we design the trust metric to adjust $\alpha_{ij}$ and tune the controller response. 

\subsection{Rate-Tunable Control Barrier Functions}
In contrast to the standard CBF where $\alpha$ in (\ref{eq::cbf_condition}) is fixed, we aim to design a method to adapt the values of $\alpha$ depending on the trust metric that will be designed in Section \ref{section::trust_metric}. In this section, we show that treating $\alpha$ as a state with Lipschitz continuous dynamics can still ensure forward invariance of the safe set. 
\begin{definition}(Rate-Tunable CBF)
Consider the system dynamics in \eqref{eq::dynamics_general}, augmented with the state $\alpha\in \reals$ that obeys the dynamics
\eqn{
\begin{bmatrix} \dot x \\ \dot \alpha \end{bmatrix} = \begin{bmatrix} f(x) + g(x) u \\ f'(x, \alpha) \end{bmatrix},
\label{eq::augmented}
}
where $f'$ is a locally Lipschitz continuous function w.r.t $x,\alpha$. Let $\s$ be the safe set defined by a continuously differentiable function $h$ as in~\eqref{eq::safeset}. $h$ is a Rate Tunable-CBF for the augmented system~\eqref{eq::augmented} on $\s$ if
 \eqn{
 \sup_{u\in \reals^{m_i}} \left[ \frac{\partial h}{\partial x}( f(x) + g(x)u ) \right] \geq -\alpha h(x) ~ \forall x \in \mathcal{X} , \alpha\in \reals.
 \label{eq::cbf_derivative}
 }
\end{definition}
Note that the above definition assumes unbounded control input. For bounded control inputs, the condition of existence of class-$\mathcal{K}$ function in standard CBFs would possibly have to be replaced with finding a domain $\mathcal{D}\subset \reals$ over which the state $\alpha$ must evolve so that the CBF condition holds. Such an analysis will be addressed in future.

\begin{theorem}
\label{theorem::tunable_CBF}
Consider the augmented system~(\ref{eq::augmented}), and a safe set $\s$ in \eqref{eq::safeset} be defined by a Rate-Tunable CBF $h$. For a Lipschitz continuous reference controller $u^r: \mathcal{X} \to \reals^{m_i}$, let the controller $u = \pi(x, \alpha)$, where $\pi : \mathcal{X} \times \reals \to \mathcal{U}$ be formulated as
\begin{subequations}\label{eq::theorem_qp}
\begin{align}
\begin{split}\label{eq::theorem_qpa} 
 \pi(x, \alpha) = \underset{u\in \reals^{m_i}}{\operatorname{argmin}} \quad & || u - u^r(x, \alpha) ||^2 
\end{split}\\ 
\begin{split} %\label{eq::theorem_qpb}
\textrm{s.t.} \quad & \frac{\partial h}{\partial x} ( f(x)+g(x)u )  )\geq -\alpha h(x) \label{eq::cbf_alpha}
\end{split}
\end{align}

\end{subequations}
\noindent Then, the set $\s$ is forward invariant. 
%1) the solution, u(x,$\alpha$) is locally Lipschitz continuous for $x \in int(\s)$.  2) 
\end{theorem}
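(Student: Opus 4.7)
The plan is to mirror the standard CBF forward-invariance argument, but treat $\alpha$ as a time-varying scalar that is determined by the augmented dynamics and then fed into a scalar linear differential inequality $\dot{h}(x(t)) \geq -\alpha(t)\,h(x(t))$ to which the comparison lemma applies directly. The crucial observation is that although $\alpha$ is now a state, the safe set $\s$ is defined purely in terms of $h(x)$, so the inequality $h \geq 0$ needs to be certified only along the $x$-projection of the augmented trajectory.

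First I would establish well-posedness of the closed loop. Feasibility of the QP \eqref{eq::theorem_qp} for every $(x,\alpha)$ is immediate from the Rate-Tunable CBF condition \eqref{eq::cbf_derivative}, since that condition asserts the existence of some $u$ satisfying the affine constraint \eqref{eq::cbf_alpha}. Because the objective is strictly convex and the constraint is affine, the minimizer $\pi(x,\alpha)$ is unique; under the usual regularity (e.g.\ $\partial h/\partial x \neq 0$ on $\partial \s$, as in Lemma~1) a standard KKT argument gives local Lipschitz continuity of $\pi$ in $(x,\alpha)$. Combined with the assumed local Lipschitz continuity of $f, g, u^r$, and $f'$, the augmented closed loop \eqref{eq::augmented} has a unique solution $(x(t),\alpha(t))$ on a maximal interval $[0,T)$.

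Next I would apply the comparison lemma. Along this solution, $\alpha(t)$ is a continuous (hence locally integrable) scalar function, and the CBF constraint \eqref{eq::cbf_alpha} evaluated at $u=\pi(x(t),\alpha(t))$ reads
\begin{equation*}
\dot{h}(x(t)) \;\geq\; -\alpha(t)\,h(x(t)).
\end{equation*}
Letting $\phi(t)=h(x(t))$ and comparing with the scalar linear ODE $\dot{y}=-\alpha(t)\,y$, $y(0)=\phi(0)$, whose explicit solution is $y(t)=\phi(0)\exp\!\bigl(-\int_0^t \alpha(s)\,ds\bigr)$, the comparison lemma yields $\phi(t)\geq y(t)$. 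Since $x(0)\in\s$ gives $\phi(0)\geq 0$ and the exponential factor is strictly positive, we conclude $h(x(t))\geq 0$ on $[0,T)$, i.e.\ $\s$ is forward invariant.

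The main obstacle I anticipate is justifying that the trajectory can be extended long enough for the invariance conclusion to be meaningful: local Lipschitz continuity of $f'$ permits finite-time blow-up of $\alpha(t)$, in which case the integral $\int_0^t \alpha(s)\,ds$ is only guaranteed finite on compact subintervals of the maximal interval of existence. The cleanest resolution is to state the theorem as forward invariance on the maximal interval $[0,T)$, since the comparison-lemma bound is pointwise in $t$; if a global statement is desired one must impose an additional growth assumption on $f'$ (e.g.\ linear growth in $\alpha$) to rule out finite-time escape. A secondary technical point is Lipschitzness of the QP minimizer, which is standard but formally requires the rank condition on $\partial h/\partial x$ at the boundary.
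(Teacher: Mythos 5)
Your proof is correct, but it reaches forward invariance by a different route than the paper. The paper's proof has the same Step 1 as yours (feasibility of the QP from the Rate-Tunable CBF condition, then Lipschitz continuity of the minimizer $\pi(x,\alpha)$, which the paper gets by citing Hager's result on Lipschitz continuity of QP solutions rather than a KKT argument), but its Step 2 invokes Nagumo's theorem: on $\partial\s$ one has $h(x)=0$, so the active constraint forces $\dot h \geq -\alpha\cdot 0 = 0$ and the vector field is subtangential to $\s$. You instead integrate the scalar differential inequality $\dot h \geq -\alpha(t)h$ via the comparison lemma (equivalently, the integrating factor $\exp\bigl(\int_0^t\alpha(s)\,ds\bigr)$), obtaining the quantitative bound $h(x(t))\geq h(x(0))\exp\bigl(-\int_0^t\alpha(s)\,ds\bigr)$, which is strictly stronger than bare invariance and works regardless of the sign of $\alpha(t)$. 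Your closing caveat is also well taken and in fact exposes a soft spot in the paper's own argument: the paper cites Khalil's local existence theorem but then asserts a unique solution ``for all $t\geq 0$,'' which local Lipschitz continuity of $f'$ alone does not deliver --- finite-time escape of $\alpha$ is not ruled out, so strictly speaking both proofs establish invariance only on the maximal interval of existence unless a growth condition on $f'$ is added. The one point you should make explicit rather than gesture at is the regularity needed for Lipschitzness of $\pi$: with a single affine constraint the relevant condition is on $\frac{\partial h}{\partial x}g(x)$ (the constraint gradient in $u$) when the constraint is active, not merely $\partial h/\partial x\neq 0$; the paper handles this by appealing to the single-constraint hypothesis of Hager's theorem.
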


\begin{proof}
The proof involves two steps. First, we show that $\pi(x, \alpha)$ is Lipschitz continuous function of $x,\alpha$. Second, we show that $\s$ is forward invariant, using Nagumo's Theorem\cite[Thm~3.1]{blanchini1999set}. \\
\textit{Step 1}:  
%Since $\frac{\partial h}{\partial x_i}g(x_i)\neq 0$, the following control input
% \eqn{
% u(x) = \left\{ - \left[ \frac{\partial h}{\partial x} g(x) \right]^T\left( \frac{\partial h}{\partial x} f(x)  + \alpha h(x) \right) \right\} / \bigg|\bigg|\frac{\partial h}{\partial x}g(x)\bigg|\bigg|^2
% }
% satisfies (\ref{eq::cbf_alpha}) with equality. Hence, a solution always exists. Since the objective function is quadratic, the solution is also unique[]. 
Since $h$ is a Rate-Tunable CBF, according to~\eqref{eq::cbf_derivative}, there always exists a $u \in \reals^{m_i}$ that satisfies the constraint of the QP~\eqref{eq::cbf_alpha}. Since the QP has a single constraint, the conditions of~\cite[Theorem~3.1]{hager1979lipschitz} are satisfied, and hence the function $\pi$ is Lipschitz with respect to the quantities $h(x), L_fh(x), L_gh(x), u^r(x)$ and $\alpha$.  Since these quantities are Lipschitz wrt to $x$ and $\alpha$, by the composition of Lipschitz functions, $\pi(x, \alpha)$ is Lipschitz with respect to both arguments.
% The data matrices are obtained by comparing the QP to its standard form
% \begin{align}
% \begin{split}\label{eq::theorem_qpa} 
%  u(t) = \arg \min_{u\in \reals^m} \quad & u^T R x + r^T x
% \end{split}\\ 
% \begin{split} \label{eq::theorem_qpb}
% \textrm{s.t.} \quad & A x + b \leq 0
% \end{split}
% \end{align}
% \noindent
% where $A = \frac{\partial h}{\partial x}g(x), b=\frac{\partial h}{\partial x}f(x) + \alpha h(x),R = \mathcal{I},r=-2u_r^T$ with $\mathcal{I}$ being identity matrix. 
% Since $h$ is a continuously differentiable function, $\partial h/\partial x$ and hence all of $A,b,R,r$ are locally Lipschitz continuous functions of $x,u_r,\alpha$. Therefore, $u$ is locally Lipschitz in $x,\alpha$. \\

\textit{Step 2:} Since $f,g,f'$ are Lipschitz functions, the closed-loop dynamics of augmented system \eqref{eq::dynamics_general}, (\ref{eq::augmented}) is also Lipschitz continuous in $x,\alpha$. From \cite[Thm~3.1]{khalil2002nonlinear}, for any $x(0)=x_0\in \mathcal{X}$, there exists a unique solution for all $t \geq 0$. Since at any $x \in \partial \s$, the constraint in the QP forces $\dot h(x, \pi(x, \alpha)) \geq 0$, by Nagumo Theorem's~\cite{blanchini1999set} the $\s$ if forward invariant.
\end{proof}

\vspace{-4.1mm}
\subsection{Design of Trust Metric}
\label{section::trust_metric}
\begin{figure}[htp]
    \centering
    \includegraphics[scale=0.3]{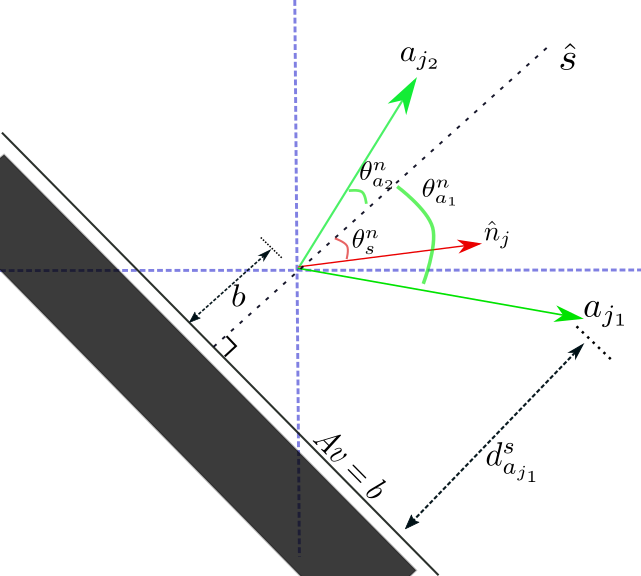}
    \caption{\small{Illustration of Half-space constraint. $Av\geq b$ represents the halfspace described by \eqref{eq::allowed_halfspace} with $A = \frac{\partial h_{ij}}{\partial x_j}$, $v=\dot{x}_j$, and $b =  -\alpha h - \max \left\{ \frac{\partial h_{ij}}{\partial x_i}\dot{x}_i \right\}$. $a_{j_1}, a_{j_2}$ are two instances of actual motions of agent $j$. $\hat{s}$ is the normal to the hyperplane $Av=b$. $\hat n_j$ is the nominal direction of motion for $j$.}
    }
    \label{fig:allowed_space}
\end{figure}

The margin by which CBF condition (\ref{eq::cbf_condition}) is satisfied i.e., the value of $\dot{h}_{ij}+\alpha_{ij}h_{ij}$,  and the best action agent $i$ can implement to increase this margin for a given motion $\dot{x}_j$ of agent $j$, are both important cues that help infer the nature of $j$. The adaptation of parameter $\alpha_{ij}$ by agent $i$ depends on the following:\\ 
% \begin{enumerate}
    % \item 
    \noindent \textbf{(1)Allowed motions of robot $\bm j$}: The worst a robot $j$ is allowed to perform in $i$'s perspective is the critical point beyond which $i$ cannot find a feasible solution to (\ref{eq::CBF_QP}). This is formulated as follows
    \eqn{
      \frac{\partial h_{ij}}{\partial x_j}\dot{x}_j &\geq -\alpha_{ij} -\frac{\partial h_{ij}}{\partial x_i}\dot{x}_i, \\
      &\geq -\alpha h - \max_{\dot{u}_i} \left\{ \frac{\partial h_{ij}}{\partial x_i}\dot{x}_i \right\}.
      \label{eq::allowed_halfspace}
    }
    This gives a lower bound that, if violated by $j$, will render $i$ unable to find a feasible solution to (\ref{eq::CBF_QP}). The required maximum value in Eq.(\ref{eq::allowed_halfspace}) can be obtained from the following Linear Program (LP)
    %\eqn{
     %   \max \quad & \frac{\partial h_{ij}}{\partial x_i}\dot{x}_i \\
        %\textrm{s.t.} \quad & \textrm{other barrier constraints hold} 
      %  \textrm{s.t.} \quad & \dot{h}_{ik} \geq -\alpha_{ik} h_{ik} %~~\forall k\in \mathcal{V}\setminus {i,j}
    %}
\begin{subequations}\label{eq::LP}
    \begin{align}
        \begin{split}\label{eq::LPa} 
             \max_{u_i} \quad & \frac{\partial h_{ij}}{\partial x_i}\dot{x}_i 
             \vspace{-0.1cm}
        \end{split}\\ 
        \begin{split} \label{eq::LPb}
            \textrm{s.t.} \quad & \dot{h}_{ik} \geq -\alpha_{ik} h_{ik}, ~~\forall k\in \mathcal{V}\setminus {i,j}.
        \end{split}
    \end{align}
% \vspace{-0.1cm}
\end{subequations}
    Equation \eqref{eq::allowed_halfspace} represents a half-space whose separating hyperplane has the normal direction $\hat s=\partial h_{ij}/\partial x_j$, and has been visualized in Fig.\ref{fig:allowed_space}.
    
    % \item 
\noindent    \textbf{(2)Actual motion of robot $\bm j$}: This is given by Eq.(\ref{eq::actual_motion}).
    
    % \item
\noindent    \textbf{(3)Nominal behavior of robot $\bm j$}: Suppose $i$ has knowledge of $j$'s target state. A nominal direction of motion $\hat{n}_j$ of $j$ in $i$'s perspective can be obtained by considering the Lyapunov function $V^j_i = ||x_j-x_j^r||^2$ and one can write 
    %Then the ideal direction of motion $\hat n_j$ for robot $j$ without regard to safety to any other robot and if it were an intact is given as
    \eqn{
        \hat{n}_j = -\frac{\partial V^j_i}{ \partial x_j }/|| \frac{\partial V^j_i}{ \partial x_j } ||.
    }
    This plays an important role in shaping belief as it helps to distinguish between uncooperative and adversarial agents. Note that $x_j^r$ need not be the true target state of agent $j$ for our algorithm to work. In fact, if we do not know $x_j^r$, we can always a assume a worst-case scenario where $x_j^r=x_i$, which corresponds to an adversary. With time though, our algorithm will learn that $j$ is not moving along $\dot{x}_j^r$ and increases its trust.
    
% \end{enumerate}
The two quantities of interest from Fig.\ref{fig:allowed_space} are the distance of $a_j^i$ from hyperplane, which is the margin by which CBF condition (\ref{eq::cbf_condition}) is satisfied, and the deviation of actual movement $a_j^i$ from the nominal direction $\hat n_j$.
%Both the direction of $a_j$ w.r.t $\hat n$ and the safety margin by which CBF inequality is satisfied are cues to form belief about $j$. These two can be designed independently as follows.

% \textbf{This section is very crucial but is not written very well. Some more elaboration on how the models are built here is needed.}

\subsubsection{Distance-based Trust Score}

Let the half-space in \eqref{eq::allowed_halfspace} be represented in the form $Av \geq b$ with $v = \dot{x}_j$ and $A,b$ defined accordingly. The distance $d_v^s$ of a vector $v$ from the dividing hyperplane (see Fig. \ref{fig:allowed_space}) is given by 
\eqn{
d_v^s = b - Av,
}
where $d_s^v<0$ is an incompatible vector, and is a scenario that should never happen. For $d_s^v>0$, its numerical value tells us by what margin CBF constraint is satisfied. Therefore, distance-based trust score is designed as
\eqn{
\rho_d = f_d(d^s_{a_j}),
\label{eq::rho_d}
}
where $f_d:\reals^+\rightarrow[0,1]$ is a monotonically-increasing, Lipschitz continuous function, such that $f_d(0)=0$. An example would be $f_d(d) = \tanh(\beta d)$, with $\beta$ being a scaling parameter.

\subsubsection{Direction-based Trust Score}

\noindent Suppose the angle between the vectors $\hat n$ and $\hat s$ is given by $\theta_s^n$ and between $a_j^i$ and $\hat s$ by $\theta_s^a$. The direction-based trust is designed as
% \eqn{
% \rho_\theta = \left\{
% 	\begin{array}{cl}
% 		1  & \mbox{if } \theta_s^a > \theta^n_s, \theta_a^n \geq \bar \theta  \\
% 		\beta_2 + \theta_a^n/\bar \theta & \mbox{if } \theta_s^a > \theta^n_s, \theta_a^n < \bar \theta \\
% 		\beta_2 & \mbox{if } \theta_s^a = \theta^n_s \\
% 		\beta_1 + (\beta_2-\beta_1)\theta_s^a/\theta_s^n & \mbox{if } \theta_s^a < \theta^n_s
% 	\end{array}
% \right.
% }
\eqn{
\rho_\theta = f_\theta( \theta^n_s/\theta^a_s ),
%\rho_\theta = \beta_2 + \tanh{(\beta_1\theta^n_s/\theta^a_s)}
\label{eq::rho_theta}
}
where $f_\theta:\reals^+ \rightarrow [0,1]$ is again a monotonically-increasing Lipschitz continuous function with $f_\theta(0)=0$. Note that even if $\theta^a_s=\theta^n_s$, i.e., $j$ is perfectly following its nominal direction, the trust may not be $1$ as the robot might be uncooperative. However, when $\theta^a_s<\theta^n_s$, as with $a_{j_2}$ in Fig. \ref{fig:allowed_space}, $j$ seems to be compromising its nominal movement direction for improved safety, thus leading to a higher score. Finally, when $\theta^a_s<\theta^n_s$, as with $a_{j_1}$ in Fig. \ref{fig:allowed_space}, then $j$ is doing worse for inter-robot safety than its nominal motion and is therefore either uncooperative/adversarial or under the influence of other robots, both of which lead to lower trust.

%$\beta_1,\beta_2$ are chosen such that $\rho_\theta\in [0,1]$ with higher values for a cooperative and cooperative agent.

%Here $\beta_1 \neq 0$ because it does not represent the worst case scenario. It represents the case when system is doing the least for safety. We would like 0 to represent the absolute worst case where the adversary is trying to intentionally collide with other robots. Similarly
%$\beta_1< 1$ because a robot going towards it's goal by exactly following $\hat n$ might be uncooperative and simply ignoring other robots.

\subsubsection{Final Trust Score}
The trust metric is now designed based on $\rho_d$ and $\rho_{\theta}$. Let $\bar \rho_d\in (0,1)$ be the desired minimum robustness in satisfying the CBF condition. Then, the trust metric $\rho\in [-1,1]$ is designed as follows:
\eqn{
\rho = \left\{ \begin{array}{cc}
    (\rho_d-\bar \rho_d)\rho_\theta,  & \mbox{if } \rho_d \geq \bar \rho_d,\\
     (\rho_d-\bar \rho_d)(1-\rho_\theta),  & \mbox{if } \rho_d < \bar \rho_d.
\end{array} \right.
\label{eq::trust}
}
Here, $\rho=1,-1$ represent the absolute belief in another agent being cooperative and adversary, respectively. If $\rho_d>\bar \rho_d$, then we would like to have $\rho>0$, and its magnitude is scaled by $\rho_\theta$ with smaller values of $\rho_\theta$ conveying low trust. Whereas, if $\rho_d-\bar \rho_d<0$, then we would like the trust factor to be negative. A smaller $\rho_\theta$ in this case implies more distrust and should make the magnitude larger, hence the term $1-\rho_\theta$.

% For a desired minimum robustness in satisfying the safety constraint,  When $\rho_d$ is close to 1, we would like to trust $j$ regardless of $\rho_\theta$ to prevent conservative behavior. Then for some thresholds $\bar \rho_m>0$, the trust is defined as
% \eqn{
%   \rho = \left\{
%         \begin{array}{cl}
%              \rho_\theta \rho_m &  \mbox{if } \rho_m > \bar \rho_m \\
%              \rho_\theta \rho_m &  \mbox{if } \rho_m < \bar \rho_m, \rho_\theta>0.5\\
%              -(1-\rho_\theta)(1-\rho_m) &  \mbox{if } \rho_m < \bar \rho_m, \rho_\theta<0.5
%         \end{array}
%   \right.
%   \label{eq::rho}
% }
% In this design $\rho_\theta$ decides the direction of change and $\rho\theta,\rho_m$ decides

The trust $\rho$ is now used to adapt $\alpha$ with following equation
\eqn{
    \dot{\alpha}_{ij} = f_{\alpha_{ij}}(\rho),
    \label{eq::alpha_dot}
}
where $f_{\alpha_{ij}}:[-1,1]\rightarrow \reals$
%, f_\alpha(0)=0$
 is a monotonically increasing function. A positive value of $f_{\alpha_{ij}}$ relaxes the CBF condition by increasing $\alpha$, and a negative value decreases $\alpha$. The framework that each agent $i$ implements at every time $t$ is detailed in Algorithm. \ref{algo::framework}.

\begin{algorithm}[ht]
\caption{Trust-based Multi-agent CBF}

\begin{algorithmic}[1]
    \Require $x, i, f_\alpha, f_\theta, f', \bar \rho_d$ 
    \State $\alpha_{ij} \leftarrow \text{Current value of CBF Parameters}$ 
    %  $ \Comment{slice w.r.t parameter $k$}
    \For{all time}
    \For{$j \in \mathcal{V}\setminus i$}
        \State Predict $a_j^i$ \Comment{Actual movement of $j$}
        \State Compute $\rho_\theta,\rho_d$ with Eqns.\eqref{eq::rho_theta},\eqref{eq::rho_d}
        \State Compute $\rho$ with \eqref{eq::trust}\Comment{Trust score}
        \State Calculate nominal input using \eqref{eq::CLF}.
        \State Implement $u_i$ from \eqref{eq::CBF}.
        \State Update $\alpha_{ij}$ with update rule (\ref{eq::alpha_dot})
    \EndFor
    \EndFor
\end{algorithmic}

\label{algo::framework}
\end{algorithm}

\begin{remark}
Note that equation \eqref{eq::trust} does not represent a locally Lipschitz continuous function of $\rho_\theta,\,\rho_d$. This poses theoretical issues as having a non-Lipschitz gradient flow in \eqref{eq::alpha_dot} would warrant further analysis concerning on existence of unique solution for the combined dynamical involving $x$ and $\alpha$. Hence, while \eqref{eq::trust} represents our desired characteristics, we can use a sigmoid function to switch across the boundary thereby holding the assumptions in Theorem \ref{theorem::tunable_CBF} true.
\end{remark}

% \section{Allowed Values of $\dot{\alpha}$}
% \red{Hardik: this subsection needs to be completed}
% The CBF constraint $d_s>0$ needs to be satisfied at the next time step so that $d_s + \partial d_s >0$. This can be solved to have a bound on $\partial \alpha$ as
% \eqn{
%   \partial \alpha \geq -(d_s + \partial d_s|_\alpha)/h
% }
% This condition ensures that a solution to CBF equation exists at next time step. The gradient designed before can be written as
% \eqn{
% \dot{\alpha} = f(d_s,\theta^a_s,\theta^n_s)
% }

\subsection{Sufficient Conditions for a Valid trust Metric}
In the presence of multiple constraints, feasibility of the QP (\ref{eq::CBF_QP}) might not be guaranteed even if the control input is unbounded and the states are far away from boundary, i.e., $h_{ij}>>0$. In this section, we derive conditions that guarantee that the QP does not become infeasible.
%In the following subsection, given that at initial time there exists a solution to (\ref{eq::CBF_QP}) and Assumption \ref{assumption::estimate} holds, we present conditions on $\dot{\alpha}$ that guarantee existence of a trajectory to augmented system (\ref{eq::dynamics}),(\ref{eq::alpha_dot}) $\forall t>0$ until the boundary condition $h_{ij}=0$ is reached.

\begin{assumption}
For a feasible solution of QP in \eqref{eq::theorem_multi_barriers}, the resulting $u(x,\alpha) \in \U$ is always Lipschitz continuous in $x$ and $\alpha_j$. Note that, in order to hold Assumption \ref{assumption::true_estimate}, one needs Assumption \ref{assumption::control_Lipschtiz} to be satisfied.
\label{assumption::control_Lipschtiz}
\end{assumption}

\begin{assumption}
Suppose the initial state satisfies the barrier constraints strictly, i.e., $h_{ij}(x_i(0),x_j(0)) > 0, ~j\in \mathcal{V}\setminus i$. Now suppose there always exist $\alpha_{ij}>0$ at time $t$ such that the QP (\ref{eq::CBF_QP}) is feasible, then $h_{ij}(x_i(t),x_j(t)) > 0~ \forall t>0$ .
\label{assumption::safety_possible}
\end{assumption}
%This assumption allows us to deal with cases where infeasibility of (\ref{eq::CBF_QP}) stems only from conflicting CBF constraints.

\begin{theorem}(Compatibility of Multiple CBF Constraints)
Under Assumptions \ref{assumption::true_estimate}-\ref{assumption::safety_possible}, consider the system dynamics in  \eqref{eq::dynamics_general} subject to multiple barrier functions $h_j, j \in \mathcal{V}_M =  {1,2..,M}$.
 Let the controller $u(x,\alpha_j)$ be formulated as
\begin{subequations}\label{eq::theorem_multi_barriers}
    \begin{align}
        \begin{split}\label{eq::theorem_multi_barriersa} 
         u(x,\alpha_j) = \arg \min_{u} \quad & || u - u^r ||^2
         \vspace{-0.1cm}
        \end{split}\\ 
        \begin{split} \label{eq::theorem_multi_barriersb}
        \textrm{s.t.} \quad & \dot{h}_{j}(x,u) \geq -\alpha_j h_j(x), \forall j\in \mathcal{V}_M,
        \end{split}
    \end{align}
% \vspace{-0.1cm}
\label{eq::th2_eq}
\end{subequations}
with corresponding parameters $\alpha_j$. Now suppose $\alpha_j$, as an augmented state, has following dynamics
\eqn{
   \dot \alpha_j = f_{j}(x,\alpha_1,\alpha_2,..,\alpha_M). 
   \label{eq::alphaj_dynamics}
}
Let $L_{h_j}$ and $L_{\dot{h}_j}$ be the Lipschitz constants of $h_j(x)$ and $\dot{h}_j(x,\dot{x})$. Suppose a solution to (\ref{eq::th2_eq}) exists at initial time $t=0$ and $\dot{\alpha}_j$ satisfies the following condition with $B(x)=||\hat{F}(x)|| + b_F(x)$ (see Assumption \ref{assumption::estimate})
\eqn{
 \dot{\alpha}_j \geq \frac{ -(  d_j + L_{\dot h_{j}}L_F B(x)^2  + \alpha_j L_{h_j}B(x)   ) }{h_j}.
  \label{eq::alpha_cond}
}
Then, a solution to (\ref{eq::th2_eq}) continues to exist until $h_j= 0, j\in \mathcal{V}_M$.  
%\eqref{eq::theorem_multi_barriers} if $h_j\neq 0$.
\end{theorem}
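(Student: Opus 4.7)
The plan is to argue by contradiction and track the slack of each CBF constraint along the closed-loop trajectory. Suppose $t^\star > 0$ is the first time at which the QP~\eqref{eq::theorem_multi_barriers} becomes infeasible while $h_j(t^\star) > 0$ for every $j \in \mathcal{V}_M$; the goal is to show this is impossible under the update law~\eqref{eq::alpha_cond}.

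First, I would introduce, for each $j$, the realized slack
\begin{equation}
    s_j(t) \;:=\; \dot h_j(x(t), u^\star(t)) + \alpha_j(t)\, h_j(x(t)),
\end{equation}
where $u^\star(t)$ denotes the optimizer of~\eqref{eq::theorem_multi_barriersa}--\eqref{eq::theorem_multi_barriersb}. Feasibility of the QP at time $t$ is equivalent to the half-spaces $\{u \in \mathcal{U} : s_j \geq 0\}$ having a common point, realized at the optimum by $u^\star$. The initial-feasibility hypothesis gives $s_j(0) \geq 0$ for all $j$, and Assumption~\ref{assumption::control_Lipschtiz} yields a Lipschitz selection $u^\star(\cdot)$ on a right-neighborhood of $0$, so that $s_j$ is absolutely continuous there.

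Second, I would lower-bound $\dot s_j$ along the trajectory:
\begin{equation}
    \dot s_j \;=\; \tfrac{d}{dt}\,\dot h_j(x, u^\star) + \dot\alpha_j\, h_j + \alpha_j\, \dot h_j .
\end{equation}
Using the Lipschitz constant $L_{\dot h_j}$ of $\dot h_j(x,\dot x)$, the envelope $\|\dot x\| \leq B(x)$ from Assumption~\ref{assumption::estimate}, and the Lipschitz constant $L_F$ of the closed-loop vector field $F$, the first term is bounded below by $-L_{\dot h_j} L_F\, B(x)^2$ and the third by $-\alpha_j L_{h_j}\, B(x)$. Bundling the remaining residual into the non-negative design margin $d_j$ yields
\begin{equation}
    \dot s_j \;\geq\; -\bigl(d_j + L_{\dot h_j} L_F\, B(x)^2 + \alpha_j L_{h_j}\, B(x)\bigr) + \dot\alpha_j\, h_j .
\end{equation}
The hypothesis~\eqref{eq::alpha_cond}, after multiplying through by $h_j > 0$, is exactly the condition that the right-hand side above is non-negative, so $\dot s_j \geq 0$ on $[0, t^\star)$. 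Combined with $s_j(0) \geq 0$, this gives $s_j(t) \geq 0$ throughout $[0, t^\star)$ and, by continuity, $s_j(t^\star) \geq 0$ for every $j$. But this means $u^\star(t^\star)$ still satisfies all CBF constraints at $t^\star$, contradicting the definition of $t^\star$. Assumption~\ref{assumption::safety_possible} then ensures $h_j(t) > 0$ on the entire interval of feasibility.

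The main obstacle will be justifying the differentiability of $u^\star(t)$ when the active set of the QP changes: Hager's theorem, as invoked for a single constraint in Theorem~\ref{theorem::tunable_CBF}, does not immediately yield Lipschitz sensitivity in the multi-active case. A KKT-based argument at switching instants, replacing $\dot s_j$ by its lower Dini derivative, or a smooth penalty/barrier reformulation of the QP, is likely needed to make the differential inequality above rigorous through active-set changes while preserving the final sign conclusion.
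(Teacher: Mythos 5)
Your overall strategy coincides with the paper's: both arguments track the per-constraint slack (your $s_j$ is exactly the paper's $d_j=\dot h_j+\alpha_j h_j$), bound its variation using $L_{h_j}$, $L_{\dot h_j}$, $L_F$ and the velocity envelope $B(x)$, and use the hypothesis on $\dot\alpha_j$ to keep that slack nonnegative; your continuous-time contradiction at the first infeasibility instant is the trajectory-level phrasing of the paper's one-step variational argument. The gap is in your key inequality. You aim for $\dot s_j \geq 0$ and assert that \eqref{eq::alpha_cond} delivers it, but \eqref{eq::alpha_cond} only gives $\dot\alpha_j h_j \geq -\left(d_j + L_{\dot h_j}L_F B(x)^2 + \alpha_j L_{h_j}B(x)\right)$; substituting this into your bound $\dot s_j \geq -\left(d_j + L_{\dot h_j}L_F B(x)^2 + \alpha_j L_{h_j}B(x)\right) + \dot\alpha_j h_j$ yields $\dot s_j \geq -2\left(d_j+\cdots\right)$, not $\dot s_j\geq 0$. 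Moreover, the $-d_j$ term in your lower bound on $\dot s_j$ has no source: differentiating $s_j$ produces $\tfrac{d}{dt}\dot h_j + \dot\alpha_j h_j + \alpha_j\dot h_j$, whose first and third terms are controlled by the Lipschitz quantities alone, and ``bundling the residual into $d_j$'' is not a legitimate operation because $d_j$ \emph{is} the slack, i.e., $d_j=s_j$. The $d_j$ in the numerator of \eqref{eq::alpha_cond} plays a different role than the one you assign it: it is the budget by which the slack is permitted to \emph{decrease}. The paper's target is $d_j+\delta d_j\geq 0$, equivalently $\delta s_j \geq -s_j$ --- a Nagumo/comparison-type condition on the set $\{s_j\geq 0\}$ --- not monotonicity of $s_j$. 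To repair your argument, replace ``$\dot s_j\geq 0$, hence $s_j$ nondecreasing'' with ``$\dot s_j \geq -c(t)\,s_j - r(t)$ with the residual $r$ absorbed by the hypothesis, hence $s_j\geq 0$ by the comparison lemma,'' or work directly with the finite increment $s_j(t+\delta t)=s_j(t)+\delta s_j$ as the paper does.

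Two further remarks. First, your closing concern about the regularity of $u^\star(t)$ across active-set changes of the multi-constraint QP is legitimate, but the paper does not resolve it either: it is absorbed wholesale into Assumption~\ref{assumption::control_Lipschtiz}, which postulates that the solution of \eqref{eq::th2_eq} is Lipschitz in $(x,\alpha_j)$. You may simply invoke that assumption rather than constructing a KKT or Dini-derivative argument. Second, be aware that even the corrected comparison argument inherits a sign issue from the theorem statement itself: since $\delta\dot h_j$ and $\delta h_j$ can be negative, the worst case of $-\left(d_j+\delta\dot h_j+\alpha_j\delta h_j\right)$ is $-d_j + L_{\dot h_j}L_F B(x)^2 + \alpha_j L_{h_j}B(x)$, so the Lipschitz terms should appear with the opposite sign to that in \eqref{eq::alpha_cond}; that discrepancy lies in the paper, not in a divergence between your route and its proof, but it means neither derivation closes cleanly as stated.
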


\begin{proof}
Based on Assumptions \ref{assumption::true_estimate} and \ref{assumption::estimate},  one can write $\dot{x}\leq F(x)$ and $\delta \dot{x}\leq L_F || \delta x ||$, where $\delta y$ denotes infinitesimal variation in $y$. In order to have a feasible solution for QP in \eqref{eq::theorem_multi_barriers},  one needs to satisfy the following 
\begin{equation}
  d_j(x,\dot{x}) = \dot{h}_j(x,\dot{x}) + \alpha_j h_j(x) \geq 0. 
\end{equation}
%As we know, the whole equation is dependent on $x$ and $\dot{x}$, 
The variation in $d_j(x,\dot{x})$ is given by
\eqn{
  \delta d_j = \delta \dot{h}_j + h_j \delta \alpha_j  + \alpha_j \delta h_j.
}
For a feasible solution to QP in \eqref{eq::theorem_multi_barriers} with variation in  $\delta d_j$, one needs  $d_j + \partial d_j\geq 0 $. Thus,  the following condition holds
\eqn{
    \delta \alpha_j \geq \frac{-(d_j + \delta \dot{h}_j  + \alpha_j \delta h_j )  }{h_j}. 
    \label{eq::alpha_bd1}
}
In multi-agent setting, $h_j$ depends only a subset of $x$, namely $x_{j_1},x_{j_2}$ where $j_1,j_2\in \{1,2,\ldots,N\}$ of the whole state vector $x=[x_1,\ldots,x_N]$. However, the evolution of state dynamics $\dot{x}$ depends on control input $u$ computed based on QP formulation in \eqref{eq::theorem_multi_barriers}, $u$ depends on all the constraints $h_j$ and states $x_j$. Thus, the $\delta h_j$ and $\delta \dot x_j$ are dependent on the state of all agents and not only $x_{j_1},x_{j_2}$. This introduces coupling between all the constraints which is hard to solve for. However, given that $\dot{x}=F(x)$ with known Lipschitz constant $L_F$, we can decouple all constraints. Using $ ||\delta \dot{x}|| \leq~L_F ||\delta x|| , |\delta h_j(x)| \leq~L_{h_j} ||\delta x||$, and 
\eqn{
  |\delta \dot{h}_j(x,\dot{x})| \leq L_{\dot{h}_j} \bigg|\bigg| \begin{bmatrix} \delta x \\ \delta \dot{x}  \end{bmatrix} \bigg|\bigg| \leq L_{\dot{h}_j} || \delta x || ~ || \delta \dot{x} || \nonumber,
}
 we can write (\ref{eq::alpha_bd1}) as

% \eqn{
% \partial x \leq F(x) \\
% ||\partial \dot{x}|| \leq L_F ||\partial x|| \\
% |\partial h_j(x)| \leq L_{h_j} ||\partial x|| \\
% |\partial \dot{h}_j(x,\dot{x})| \leq L_{\dot{h}_j} \bigg|\bigg| \begin{bmatrix} \partial x \\ \partial \dot{x}  \end{bmatrix} \bigg|\bigg| \leq L_{\dot{h}_j} || \partial x || ~ || \partial \dot{x} ||
% }
% we can get 
\eqn{
  \delta \alpha_j \geq \frac{ -(  d_j + L_{\dot h_{j}}L_F||\delta x||^2  + \alpha_j L_{h_j}||\delta x||   ) }{h_j}.
}
Finally, the change $\delta x$ caused by $\dot \alpha$ is given by $\dot{x}=F(x)$ which . From Assumption \ref{assumption::estimate}, we further have a bound available on $||F(x)||$ which is $B(x)=||\hat{F}(x)|| + b_F(x)$. Therefore, if $\dot \alpha$ satisfies (\ref{eq::alpha_cond}), we have that $d_j + \delta d_j\geq 0, ~ \forall j$ and hence a solution for QP in \eqref{eq::theorem_multi_barriers} exists. This completes the proof. 
%Note that the above proof does not account for control input bounds and will be treated in future. In fact, in presence of control input bounds, the solution of QP with single barrier function is also not guaranteed to be Lipschtiz continuous and an analysis based on non-smooth control theory may be applied[].

\end{proof}
\begin{remark}
Note that when Theorem 2 is applied to design control input $u_i$ for an agent $i$ in the multi-agent setting with the concatenated state vector $x=[x_1,\ldots,x_N]$, where $x_i$ evolves based on $u_i(x, \alpha_{ij})$, the remaining states are assumed to have closed-loop Lipschtiz continuous dynamics with known bounds that is used to determine $F(x)$ in Theorem 2 from $F_j(x)$ in Assumption \ref{assumption::estimate}. 
\end{remark}

\begin{remark}
Also, consider a boundary condition scenario, i.e.,  when $h_j\rightarrow 0$ for which $\alpha_j\rightarrow \infty$. Such scenario can happen, for example, when an agent is surrounded by adversaries and has no escape path. In order to keep the QP feasible, the agent will have to keep increasing $\alpha_j$ until $h_j=0$ and then collision is unavoidable. We would expect that if there were an escape direction, then atleast one of $\dot{\alpha}_j$ would have lower bound $\leq 0$ before $h_j= 0$ is reached. Such inferences will be formally addressed in future work.
\end{remark}

\section{Simulation Results}
The proposed Algorithm 1 is implemented to design controllers for waypoint navigation of a group of three intact robots while satisfying desired constraints. The intact agents are modeled as unicycles with states given by the position coordinates $p_x,p_y$ and the heading angle $\psi$ w.r.t. a global reference frame, $\dot{p}_x = v\cos\theta, ~\dot{p}_y =v\sin\theta,~ \dot{\psi} = \omega$, where $v,\omega$ are linear and angular velocity expressed in the body-fixed frame, and act as control inputs. The adversarial and uncooperative agents are modeled as single integrators with dynamics $\dot{p}_x = v_x ,~ \dot{p}_y = v_y$, where $v_x,v_y$ are velocity control inputs. Fig.\ref{fig::scenario2} shows a scenario where an adversarial robot chases one of the intact agents, and two other uncooperative agents move along horizontal paths without regard to any other agent. The nominal trajectories in this case are straight lines from initial to target locations. None of the intact robots know the identity of any other robot in the system, and initialize $\alpha_{ij}$ to 0.8 uniformly. It can be seen that intact agents are successfully able to remain close to the nominal paths and reach their target location in given time. The fixed $\alpha$ case, on the other hand, fails to reach the goal and diverges away from nominal paths. %\st{\textbf{UNCLEAR: REPHRASE} Note that the total distance traveled by each agent in this case is also much smaller than the proposed algorithms.} 
Fig.\ref{fig::trust},\,\ref{fig::alphas}, and \ref{fig::barriers} illustrate the variation of trust metric, $\alpha_{ij}s$, and CBFs with time. The adversarial agent uses an exponentially stabilizing CLF to chase agent 1. The reference velocity for unicycles using was computed with a controller of the form: $v = k_s e_s , \omega = k_\omega (\arctan (e_y/e_x) - \psi )$, where $k_s,k_\omega>0$ are gains, both taken as $2.0$ in simulations, $e_s$ is the distance to the target location, and $e_x,e_y$ are {position errors of the agent to its nominal trajectory in X,Y coordinates} respectively. The trust metric was computed using (\ref{eq::trust}) with $\bar\rho_d=1.0$. We use first-order barrier functions for {unicycles model \cite{wu2016safety}} to enforce collision avoidance, and $f_d=\tanh(d),f_\theta=\tanh(2\theta^n_a/\theta^2_s)$ in Eqns.(\ref{eq::rho_d}),(\ref{eq::rho_theta}). The simulation video and code can be found at \textit{https://github.com/hardikparwana/Adversary-CBF}.

Assumption \ref{assumption::estimate} was realized by assuming %\textbf{UNCLEAR: DID YOU VERIFY AFTER THE SIMULATION? YOU CAN'T CHECK THE CONDITION A PRIORI?} 
(and verifying through observations) that the maximum normed difference between $\dot{x}_i(t)$, which is to be predicted, and $\dot{x}_i(t-\Delta t)$, is 10\% of the norm of $\dot{x}_i(t-\Delta t)$. This is reasonable as the simulation time step $\Delta t$ is 0.05 sec, and because the adversaries and intact agents all use Lipschitz continuous controllers. Note that the case with fixed $\alpha$ in Fig.\ref{fig::scenario2} uses the same assumption to design a controller, but still fails to reach the goal. 

\begin{figure}[h!]
    \centering
    \includegraphics[scale=0.4]{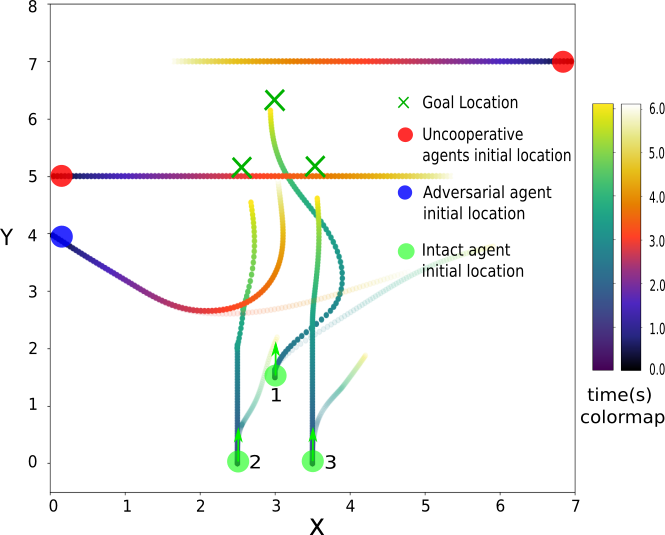}
    \caption{\small{Intact agents navigating through an environment with non-cooperative agents. The timestamp of different points on the trajectory is given by the colormap. The bold colors show the path resulting from the proposed method. The paths with increased transparency result from application of CBFs with fixed $\alpha$ in (\ref{eq::CBF_QP}).}}
    \label{fig::scenario2}
    \vspace{-5mm}
\end{figure}

\begin{figure}[h!]
    \centering
    \includegraphics[scale=0.4]{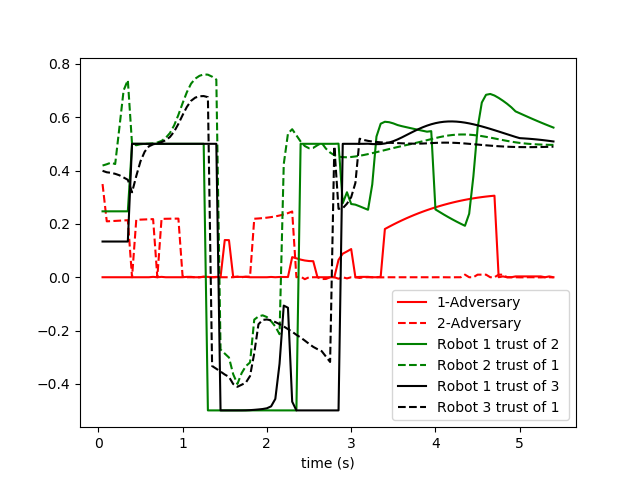}
    \caption{\small{Trust between different pairs of robots. Note that two agents need not have the same level of trust for each other. The initial trust for adversary is positive. This is because the initial value of $\alpha=0.8$ is already a very conservative value and trust based relation allows agent 1 to not deviate too much from nominal trajectory but still avoids collision.}}
    \label{fig::trust}
    \vspace{-5mm}
\end{figure}

% \begin{figure}[h!]
%     \centering
%     \includegraphics[scale=0.4]{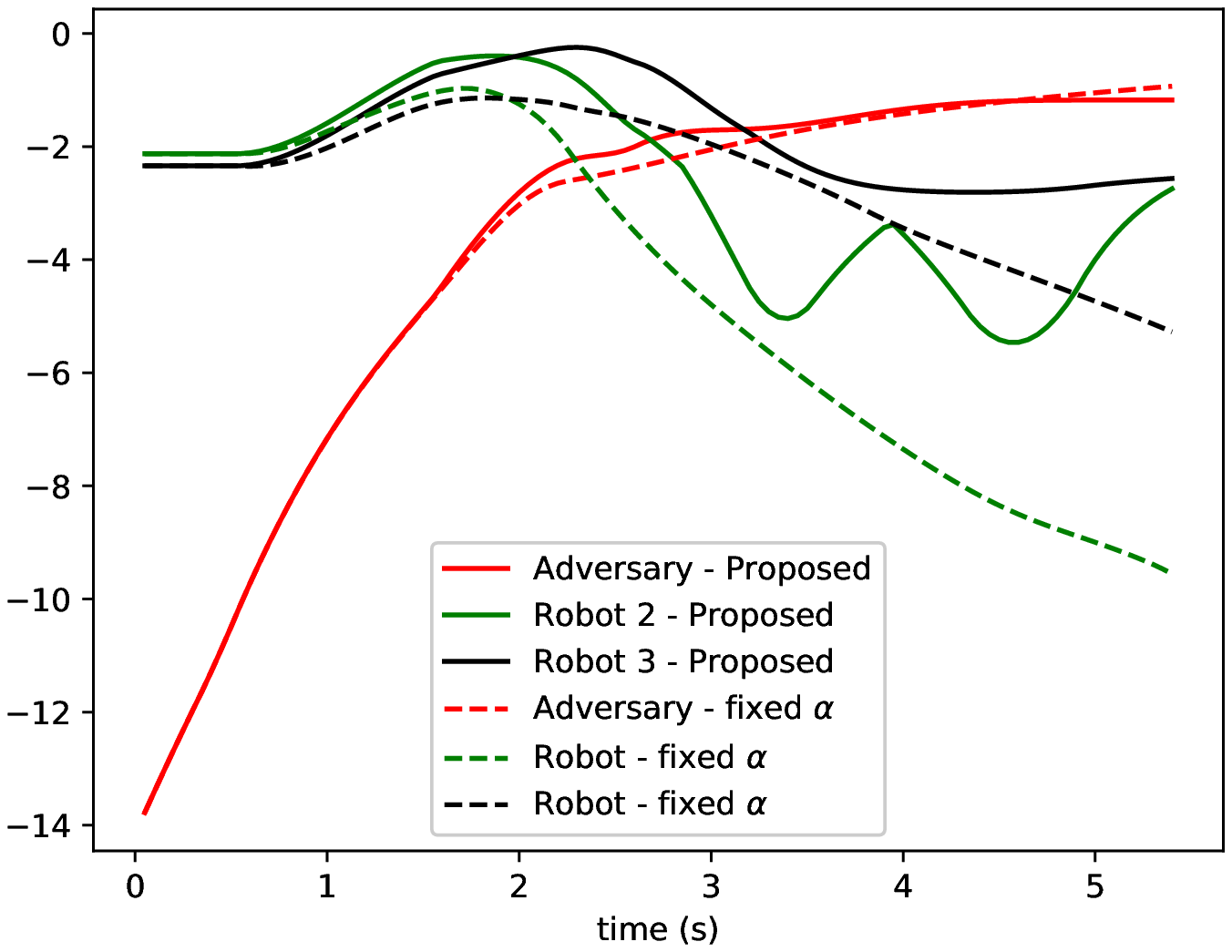}
%     \caption{Variation of barrier functions of Robot 1 with time. The safe set boundary is h=0. Trust-based relaxation allows agents to go closer to the boundaries compared to fixed $\alpha$ case and hence leads to less conservative response while guaranteeing safety.}
%     \label{fig::barriers}
%     \vspace{-5mm}
% \end{figure}

% \begin{figure}[h!]
%     \centering
%     \includegraphics[scale=0.19]{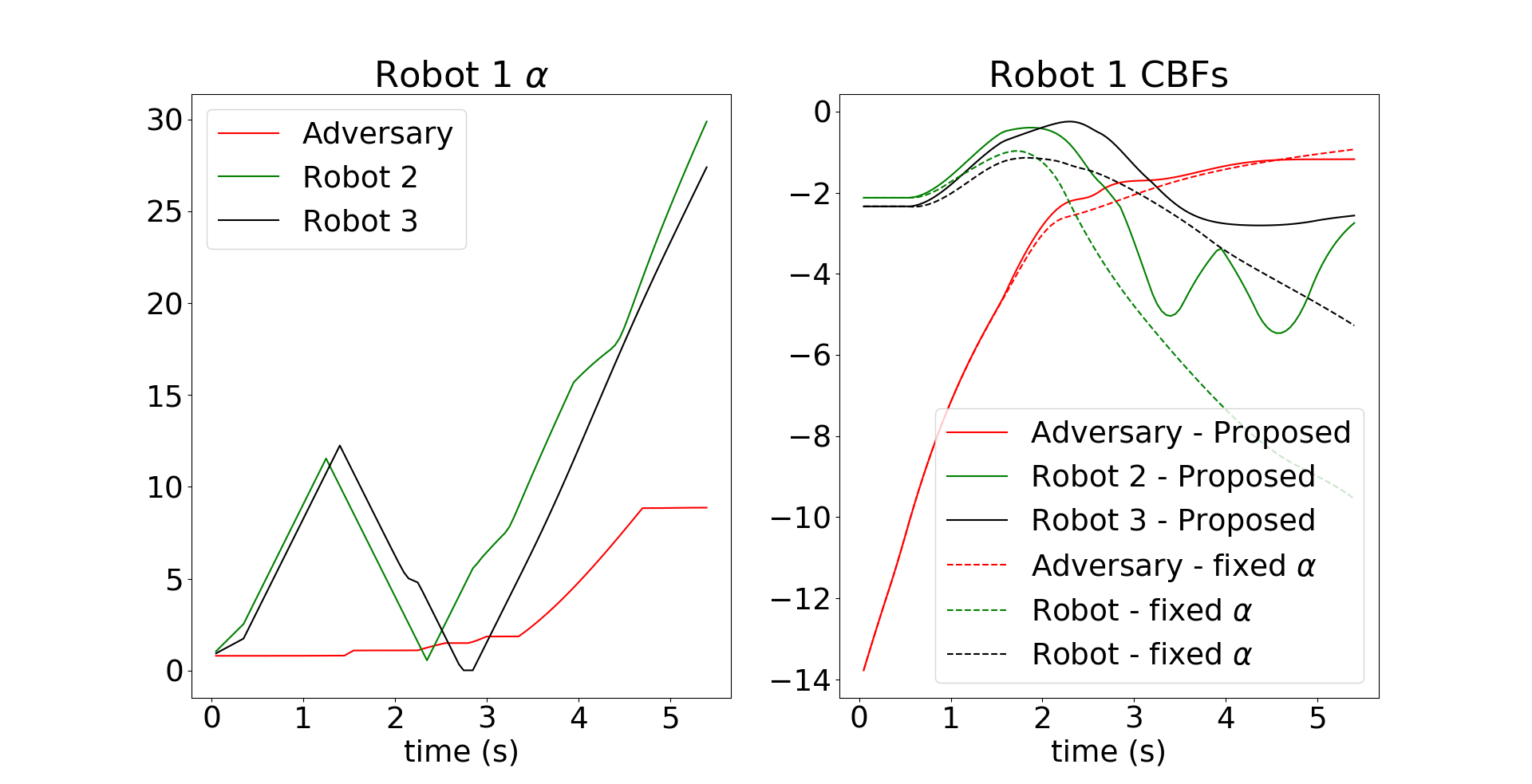}
%     \caption{\small{Variation of barrier functions and $\alpha_{ij}$ of Robot 1 with time. The safe set boundary is h=0. Trust-based relaxation allows agents to go closer to the boundaries compared to fixed $\alpha$ case and hence leads to less conservative response while still guaranteeing safety.}}
%     \label{fig::barriers}
%     \vspace{-5mm}
% \end{figure}

\begin{figure}[h!]
    \centering
    \includegraphics[scale=0.3]{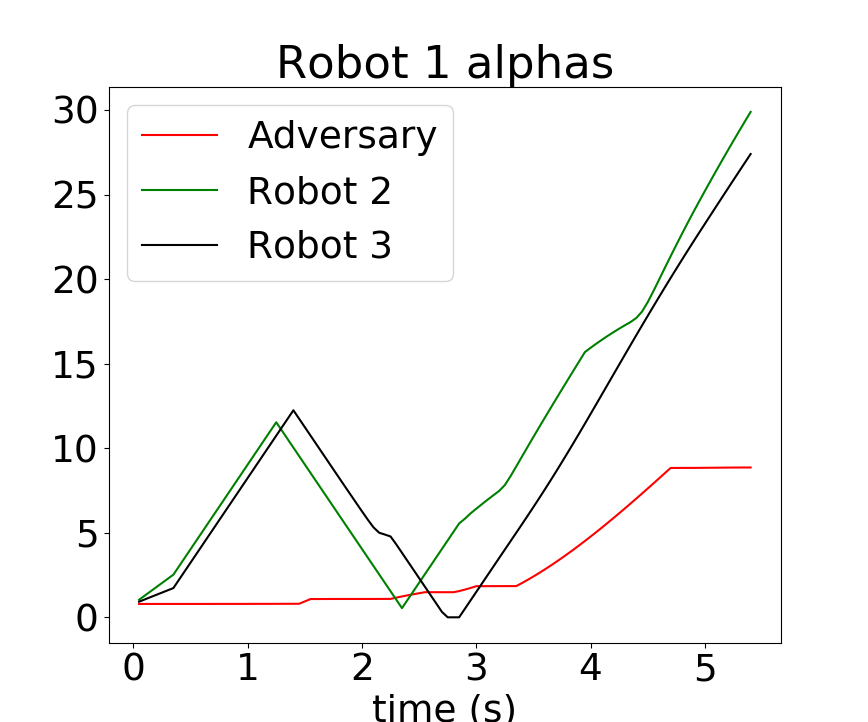}
    \caption{\small{Variation of $\alpha_{1j}$ of Robot 1 with Robot 2,3 and the adversarial agent. Trust-based adaptation allows $\alpha$ to increase, thus relaxing the constraints.}}
    \label{fig::alphas}
    \vspace{-5mm}
\end{figure}

\begin{figure}[h!]
    \centering
    \includegraphics[scale=0.25]{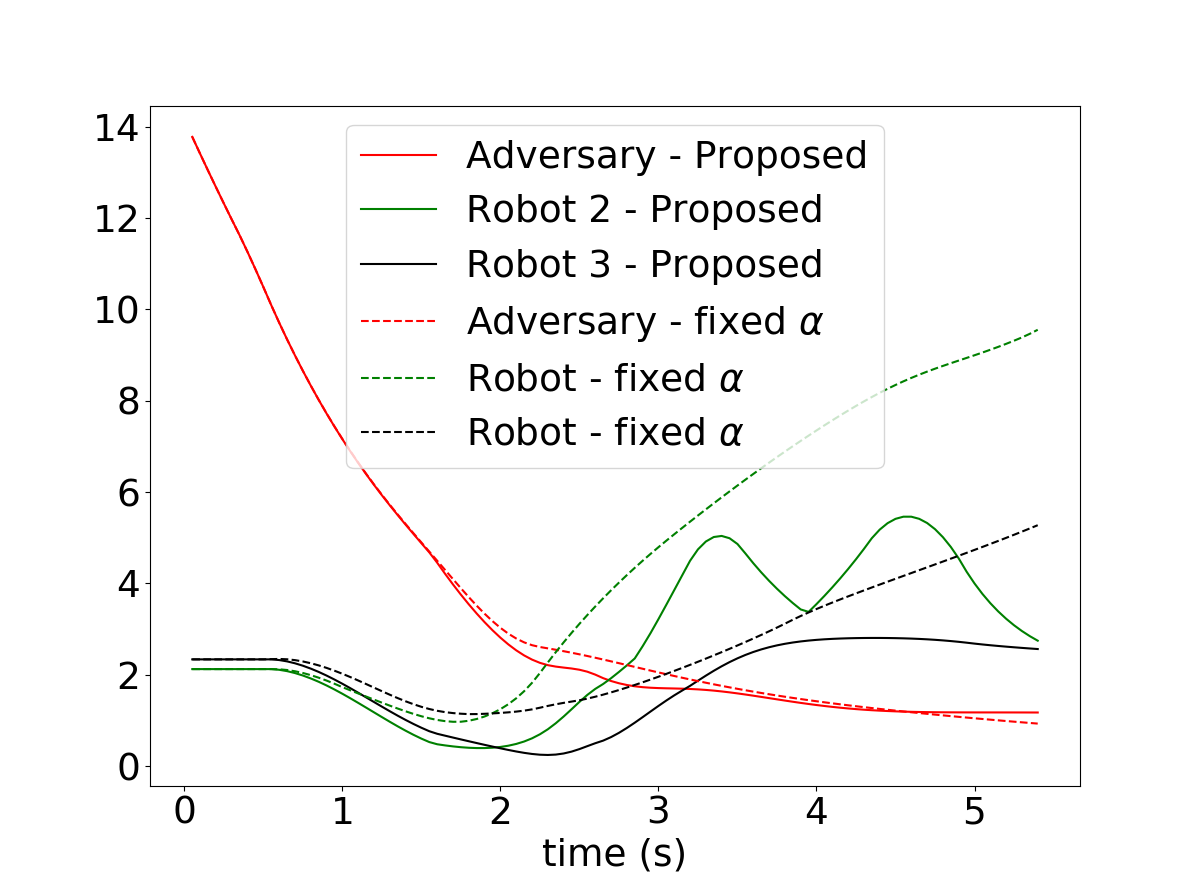}
    \caption{\small{Variation of barrier functions of Robot 1 with time. The plots represent the value of barrier function used by agent 1 for collision avoidance with agent 2,3 and the adversarial agent. The barriers with uncooperative agents are not shown but can be found along with our videos. The safe set boundary is h=0. Trust-based relaxation allows agents to go closer to the boundaries compared to fixed $\alpha$ case and hence leads to less conservative response while still guaranteeing safety.}}
    \label{fig::barriers}
    \vspace{-5mm}
\end{figure}

\subsection{Conclusion and Future Work}
This paper introduces the notion of trust for multi-agent systems where the identity of robots is unknown. The trust metric is based on the robustness of satisfaction of CBF constraints. It also provides a direct feedback to the low-level controller and help shape a less conservative response while ensuring safety. The effect of input constraints and the sensitivity of the algorithm to its parameters and function choices will be evaluated in future work.
%wrt to sufficient conditions for a valid trust metric
\bibliographystyle{IEEEtran}
\bibliography{cdc.bib}

\end{document}